\documentclass[reqno,11pt]{amsart}
\usepackage{txfonts}

\usepackage{amssymb}
\usepackage{amsthm}
\usepackage{amsmath}
\usepackage{mathrsfs}

%%%%%%%%%%%%%%%%%%%%%%%%%%%%%%%%%%%%%%%%%%%%%%%%%%

\oddsidemargin=40pt
\evensidemargin=40pt

 \textheight=8.7 true in
   \textwidth=5.2 true in
    \topmargin 15pt
     \setcounter{page}{1}

%%%%%%%%%%%%%%%%%%%%%%%%%%%%%%%%%%%%%%%%%%%%%%%%%%%%%%%%

%%%%%%%%%%%%%%%%%%%%%%%%%%%%%%%%%%%%%%%%%%%%

\theoremstyle{theorem}
\newtheorem{theorem}{Theorem}[section]

\newtheorem{cla}{Claim}

\newtheorem{thm}{Theorem}

\theoremstyle{definition}
\newtheorem{definition}[theorem]{Definition}
\newtheorem{remark}[theorem]{Remark}

%\numberwithin{equation}{section}

%%%%%%%%%%%%%%%%%%%%%%%%%%%%%%%%%%%%%%%%%%%%%%%%%%%%%%%%%%%%

%    Alphabets
\newcommand{\C}{\mathbb{C}}

\newcommand{\R}{\mathbb{R}}

%    boldmath

%    mathcal

\newcommand{\HS}{\mathcal{H}}
\newcommand{\BH}{\mathcal{L}(\mathcal{H})}
\newcommand{\D}{\mathcal{D}}
\newcommand{\RA}{\mathcal{R}}
%    mathrm

%%%%%%%%%%%%%%%%%%%%%  Date  %%%%%%%%%%%%%%%%%%%%%%%%%%%%%%%

%\date{\today}

%%%%%%%%%%%%%%%%%%%%%%%%%%%%%%%%%%%%%%%%%%%%%%%%%%%%%%%%%%%%%%%%%

\title[Decomposition of Normal Operators]{{\Large Decomposition of Normal Operators and Its Application to Spectral Theorem}}%[\null]で、2ページ以降のページ上のタイトルを非表示にできる

\author[Katsukuni Nakagawa]{}

\subjclass[2010]{Primary 47B15, Secondary 47B25}

\keywords{Normal operators, Self-adjoint operators, Spectral theorem}

\begin{document}

\maketitle

\centerline{KATSUKUNI\ \ NAKAGAWA}
\medskip
% please put the address of the first author
 \centerline{Graduate School of Advanced Science and Engineering, Hiroshima University}
%   \centerline{Other lines}
   \centerline{Kagamiyama 1-3-1, Higashi-Hiroshima 739-8526, Japan}
\centerline{e-mail: ktnakagawa@hiroshima-u.ac.jp}

%\keywords{Normal operators, Self-adjoint operators, Spectral theorem, Product spectral measure}
%%%%%%%%%%%%%%%%%%%%%%%%%%%%%%%%%%%%%%%%%%%%%%%%%%%%%%%%%%%%
%\journal{Nuclear Physics B}
%\journal{Journal of Multivariate Analysis}

\vspace{2\baselineskip}
\centerline{\scshape Abstract}
\medskip
\begin{center}
\begin{minipage}{0.8\textwidth}
{\normalsize A decomposition theorem for self-adjoint operators proved by Riesz and Lorch is extended to normal operators.
This extension gives a new proof of the spectral theorem for unbounded normal operators.}%30文字
\end{minipage}
\end{center}
\medskip

%%%%%%%%%%%%%%%%%%%  Section 1 %%%%%%%%%%%%%%%%%%%%%%%%%%%%%%%%%%%%%%%%%%%%
\section{Introduction}

Let $\HS$ be a Hilbert space. 
We denote by $\BH$ the set of all bounded linear operators on $\HS$.
Here is a decomposition theorem for self-adjoint operators proved by Riesz and Lorch in \cite{Riesz-Lorch}.

\begin{thm}[\cite{Riesz-Lorch}]
\label{thm:092}
Let $T:\HS\to\HS$ be a self-adjoint linear operator. 
Then there are $A,B\in\BH$ such that the following four assertions hold:
\begin{enumerate}
\item[(i)]$A$ is injective.
\item[(ii)]$T=A^{-1}B$.
\item[(iii)]$AB=BA$.%$A$ and $B$ are commutative,
\item[(iv)]$A$ and $B$ are self-adjoint.
\end{enumerate}
\end{thm}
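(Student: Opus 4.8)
The plan is to take $B$ and $A$ to be, respectively, the real and imaginary parts of the resolvent $R:=(T-i)^{-1}$, and to deduce (i)--(iv) from elementary facts about self-adjoint operators that do not presuppose the spectral theorem.

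First I would record what is needed about $T$ itself. Since $T=T^{*}$, the identity $\|(T\pm i)x\|^{2}=\|Tx\|^{2}+\|x\|^{2}$ holds for every $x$ in the domain $D(T)$ of $T$; hence $T-i$ and $T+i$ are injective and bounded below, so have closed range, and since $\ran(T-i)^{\perp}=\ker(T+i)=\{0\}$ they are bijections of $D(T)$ onto $\HS$ whose inverses lie in $\BH$. Thus $R:=(T-i)^{-1}\in\BH$, and a routine adjoint computation gives $R^{*}=(T+i)^{-1}$; the resolvent identity yields $R-R^{*}=2i\,RR^{*}=2i\,R^{*}R$, so in particular $R$ is normal. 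I would then set
\[
B:=\tfrac12(R+R^{*}),\qquad A:=\tfrac1{2i}(R-R^{*})=RR^{*},
\]
so that $A,B\in\BH$ and $R=B+iA$.

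Assertions (iii) and (iv) are then formal: $B^{*}=B$ and $A^{*}=A$ are read off from the formulas, and $AB-BA=\tfrac1{2i}(RR^{*}-R^{*}R)=0$ by normality of $R$. For (i), writing $A=RR^{*}$ and observing that $(T-i)(T+i)=(T+i)(T-i)=I+T^{2}$ on $D(T^{2})$ identifies $A$ with the (two-sided) inverse of the bijection $I+T^{2}\colon D(T^{2})\to\HS$; in particular $A$ is injective with $\ran A=D(T^{2})$ and $(I+T^{2})A=I$. For (ii), I would use the relations $T(T-i)^{-1}=I+iR$ and $T(T+i)^{-1}=I-iR^{*}$ (which in particular make $R$ and $R^{*}$ commute with $T$ on $D(T)$) to verify the two algebraic facts $B=TA$ and $ATx=TAx$ for all $x\in D(T)$; these give $A^{-1}B=A^{-1}TA\supseteq T$.

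The step I expect to be the real obstacle is the reverse inclusion in (ii): because $A^{-1}$ is unbounded, $A^{-1}B$ carries the domain $\{x\in\HS:Bx\in\ran A=D(T^{2})\}$, which a priori is larger than $D(T)$, and one must show the two coincide. Here I would chase domains: if $Bx=TAx\in D(T^{2})$, then $TAx\in D(T^{2})$ forces $Ax\in D(T^{3})$, and therefore $x=(I+T^{2})Ax=Ax+T^{2}(Ax)$ lies in $D(T)$, since both $Ax$ and $T^{2}(Ax)$ do. Hence the domain of $A^{-1}B$ is exactly $D(T)$, on which $A^{-1}Bx=A^{-1}ATx=Tx$ by the commutation relation; this gives $T=A^{-1}B$ and finishes the proof.
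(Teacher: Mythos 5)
Your proof is correct, but it reaches the decomposition by a genuinely different route from the paper. The paper (which obtains Theorem \ref{thm:092} as the self-adjoint case of Theorem \ref{thm:018}) takes von Neumann's theorem on $I+T^{*}T$ (quoted as Theorem \ref{thm:500}) as a black box, defines $A=(I+T^{*}T)^{-1}$ and $B=TA$ outright, and then verifies (i)--(iv) by operator algebra, with the reverse inclusion in (ii) handled by writing $x=y+T^{*}Ty$. You instead start from the elementary fact that $\pm i$ lie in the resolvent set of a self-adjoint operator, and build $A=RR^{*}$, $B=\tfrac12(R+R^{*})$ from $R=(T-i)^{-1}$; note that these are literally the same operators as the paper's, since $RR^{*}=(I+T^{2})^{-1}$ and $\tfrac12(R+R^{*})=TA$, so in effect you re-prove the self-adjoint special case of Theorem \ref{thm:500} along the way (via $(T+i)(T-i)=I+T^{2}$) rather than citing it. What your route buys is self-containedness and elementarity for the self-adjoint case: everything reduces to the resolvent identity and normality of $R$, and your domain chase $Bx\in D(T^{2})\Rightarrow Ax\in D(T^{3})\Rightarrow x=Ax+T^{2}Ax\in D(T)$ is a clean substitute for the paper's argument. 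What the paper's route buys is that it generalizes verbatim to normal $T$ (the actual Theorem \ref{thm:018}): for a closed normal operator one cannot elementarily exhibit a point of the resolvent set, whereas $I+T^{*}T$ is always boundedly invertible, which is exactly why the paper organizes the proof around Theorem \ref{thm:500} instead of a resolvent.
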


The first aim of this paper is to extend Theorem \ref{thm:092} to normal operators as follows:

\begin{theorem}
\label{thm:018}
Let $T:\HS\to\HS$ be a normal linear operator. 
Then there are $A,B\in\BH$ such that (i)$\sim$(iii) in Theorem \ref{thm:092} and the following assertion hold:
\begin{center}
{\rm (iv)} $A$ is self-adjoint and $B$ is normal. 
Moreover, if $T$ is self-adjoint, then so is $B$.
\end{center}
\end{theorem}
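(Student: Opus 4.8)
The plan is to imitate the Riesz--Lorch construction behind Theorem~\ref{thm:092}, with $T^{*}T$ playing the role of $T^{2}$ (these agree when $T$ is self-adjoint, so one genuinely recovers Theorem~\ref{thm:092} in that case). Since $T$ is normal it is closed and densely defined, so by the classical theorem of von Neumann the operator $T^{*}T$ is self-adjoint and nonnegative, $I+T^{*}T$ is a bijection of $D(T^{*}T)$ onto $\HS$, and its inverse
\[
A:=(I+T^{*}T)^{-1}
\]
lies in $\BH$, is self-adjoint, satisfies $0\le A\le I$, is injective, and has $\ran A=D(T^{*}T)\subseteq D(T)$. Put $B:=TA$. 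Then $B\in\BH$: for $x\in\HS$ we have $Ax\in D(T^{*}T)$ and
\[
\|TAx\|^{2}=\langle T^{*}TAx,\,Ax\rangle\le\langle(I+T^{*}T)Ax,\,Ax\rangle=\langle x,\,Ax\rangle\le\|x\|^{2}.
\]
This already yields (i) and the self-adjointness of $A$ in (iv).

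The crucial point, replacing the trivial identity $T(I+T^{2})^{-1}=(I+T^{2})^{-1}T$ available for self-adjoint $T$, is the commutation relation
\[
TAx=ATx\qquad(x\in D(T)).
\]
To prove it, set $u:=Ax\in D(T^{*}T)$, so $x=u+T^{*}Tu$; since $x\in D(T)$ and $u\in D(T^{*}T)\subseteq D(T)$, the vector $T^{*}Tu=x-u$ lies in $D(T)$. Because $u\in D(T^{*}T)$ we have $Tu\in D(T^{*})$ with $T^{*}(Tu)=x-u\in D(T)$, hence $Tu\in D(TT^{*})$. Now normality enters through $TT^{*}=T^{*}T$: this gives $Tu\in D(T^{*}T)$ and $T^{*}T(Tu)=T\bigl(T^{*}(Tu)\bigr)=T(x-u)$, so that $(I+T^{*}T)(Tu)=Tu+T(x-u)=Tx$, i.e. $Tu=(I+T^{*}T)^{-1}(Tx)=ATx$. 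Thus $AT\subseteq TA$. Applying the same argument to the normal operator $T^{*}$ --- for which $(T^{*})^{*}T^{*}=TT^{*}=T^{*}T$, so the same $A$ appears --- yields $AT^{*}\subseteq T^{*}A$. From these two inclusions and $A^{*}=A$, a routine adjoint computation gives $B^{*}=T^{*}A$.

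With the commutation relations in hand the rest is domain bookkeeping. For (ii): if $x\in D(T)$ then $Bx=TAx=A(Tx)\in\ran A$, so $x\in D(A^{-1}B)$ and $A^{-1}Bx=Tx$; conversely, if $x\in D(A^{-1}B)$ then $w:=Bx=TAx\in\ran A=D(T^{*}T)=D(TT^{*})$, and from $x=Ax+T^{*}TAx$ together with $T^{*}TAx=T^{*}w\in D(T)$ and $Ax\in D(T)$ one gets $x\in D(T)$ and $Tx=w+TT^{*}w=(I+T^{*}T)w=A^{-1}Bx$; hence $T=A^{-1}B$. For (iii): using $AT\subseteq TA$ and $\ran A\subseteq D(T)$, for $x\in\HS$ we compute $ABx=A\bigl(T(Ax)\bigr)=T(A^{2}x)=BAx$. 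For (iv): $A$ is self-adjoint, while $B^{*}B=(T^{*}A)(TA)=T^{*}TA^{2}$ and $BB^{*}=(TA)(T^{*}A)=TT^{*}A^{2}=T^{*}TA^{2}$, so $B$ is normal; and if $T=T^{*}$ then $B^{*}=T^{*}A=TA=B$, so $B$ is self-adjoint.

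I expect the commutation relation $AT\subseteq TA$ to be the only genuine obstacle. For self-adjoint $T$ it is immediate, but for a normal operator one cannot appeal to any functional calculus for $T$ --- that would be circular, since the aim is precisely a spectral-theorem-free proof --- so it has to be teased out by hand from the algebraic identity $TT^{*}=T^{*}T$ and the domain equality $D(T^{*}T)=D(TT^{*})$. Everything after that, including the precise identification of the domains of the unbounded operators $A^{-1}$ and $A^{-1}B$, is elementary, though it requires some care with domains.
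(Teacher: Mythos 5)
Your proposal is correct and takes essentially the same route as the paper: the same decomposition $A=(I+T^{*}T)^{-1}$, $B=TA$, with the key commutation relation $AT\subseteq TA$ extracted from $T^{*}T=TT^{*}$, the identity $B^{*}=T^{*}A$, and the resulting computation showing $B$ is normal. The only differences are cosmetic --- you bound $B$ by a direct norm estimate rather than the closed graph theorem, and you get normality of $B$ via $AT^{*}\subseteq T^{*}A$ instead of deducing $AB^{*}=B^{*}A$ from the commutativity in (iii).
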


To explain the second aim of this paper, we recall the next spectral theorem for normal operators:

\begin{thm}
\label{thm:065}
Let $T:\HS\to\HS$ be a normal linear operator. 
Then there is a Borel spectral measure $E$ such that the following equality holds:
\[
T=\int_{\C}z\,E(dz).
\]
\end{thm}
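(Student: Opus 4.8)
The plan is to reduce the unbounded statement to the bounded one by means of Theorem~\ref{thm:018}. First I apply Theorem~\ref{thm:018} to obtain $A,B\in\BH$ with $A$ self-adjoint and injective, $B$ normal, $AB=BA$, and $T=A^{-1}B$. Since $A=A^{*}$ and $AB=BA$, Fuglede's theorem gives $AB^{*}=B^{*}A$, so $A$, $B$, $B^{*}$ commute pairwise and the unital $C^{*}$-subalgebra of $\BH$ generated by $A$ and $B$ is commutative. The spectral theorem for bounded normal operators, applied to this algebra (equivalently, Gelfand--Naimark theory together with the Riesz representation theorem, or the joint spectral theorem for a commuting pair of bounded normal operators), then produces a compactly supported Borel spectral measure $F$ on $\R\times\C$ with $A=\int s\,F(ds,dw)$, $B=\int w\,F(ds,dw)$, and $f(A,B)=\int f\,dF$ for every bounded Borel function $f$ on $\R\times\C$.

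Next I exploit the injectivity of $A$. For bounded Borel $g$ one has $\ker\bigl(\int g\,dF\bigr)=\ran F\bigl(g^{-1}(0)\bigr)$, so taking $g(s,w)=s$ shows that $A$ injective forces $F(\{0\}\times\C)=0$. Hence $\phi(s,w):=w/s$ is defined $F$-a.e.\ and is Borel, and $E:=\phi_{*}F$, i.e.\ $E(S):=F\bigl(\phi^{-1}(S)\bigr)$ for Borel $S\subseteq\C$, is a Borel spectral measure on $\C$. By the change-of-variables formula for spectral integrals, the normal operator $S_{0}:=\int_{\C}z\,E(dz)$ equals $\int_{\R\times\C}(w/s)\,F(ds,dw)$, so it suffices to prove $S_{0}=T$.

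For the inclusion $S_{0}\subseteq T$: since $s$ is bounded on the support of $F$, for each $x\in\mathrm{dom}(S_{0})$ the product rule for spectral integrals gives $AS_{0}x=\bigl(\int s\,dF\bigr)\bigl(\int(w/s)\,dF\bigr)x=\int w\,dF\cdot x=Bx$, whence $Bx\in\ran A$, so $x\in\mathrm{dom}(A^{-1}B)=\mathrm{dom}(T)$ and $Tx=A^{-1}Bx=A^{-1}(AS_{0}x)=S_{0}x$. For the reverse inclusion $\mathrm{dom}(T)\subseteq\mathrm{dom}(S_{0})$: let $x\in\mathrm{dom}(T)$ and write $Bx=Ay$ with $y=Tx$. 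Applying the bounded operator $\int\bigl(\mathbf{1}_{\{|s|\ge\varepsilon\}}\,s^{-1}\bigr)\,dF$ to both sides of $Bx=Ay$ and comparing squared norms gives, with $\mu_{x}:=\langle F(\cdot)x,x\rangle$ and $\mu_{y}:=\langle F(\cdot)y,y\rangle$,
\[
\int\mathbf{1}_{\{|s|\ge\varepsilon\}}\,|w/s|^{2}\,d\mu_{x}=\mu_{y}\bigl(\{|s|\ge\varepsilon\}\bigr)\le\|y\|^{2}\qquad(\varepsilon>0);
\]
letting $\varepsilon\downarrow 0$, using $\mu_{x}(\{s=0\})=0$ and monotone convergence yields $\int|w/s|^{2}\,d\mu_{x}\le\|y\|^{2}<\infty$, that is, $x\in\mathrm{dom}(S_{0})$. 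Therefore $T=S_{0}=\int_{\C}z\,E(dz)$.

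I expect the main obstacle to be producing the joint spectral measure $F$ of the commuting pair $(A,B)$ from the bounded spectral theorem (this is exactly where Fuglede's theorem and commutative $C^{*}$-algebra theory enter), together with the domain bookkeeping in the two inclusions above; once $F$ is in hand, the change of variables $(s,w)\mapsto w/s$ and the truncation estimate are routine.
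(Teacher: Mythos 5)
Your proof is correct, and at the strategic level it mirrors the paper: both apply Theorem \ref{thm:018} to write $T=A^{-1}B$, build a joint spectral measure on $\R\times\C$ for the commuting pair $(A,B)$, represent $T$ as the integral of $w/s$ against it, and push forward by $(s,w)\mapsto w/s$. The execution of the middle step differs, though. The paper takes the individual spectral measures $E_1$ of $A$ and $E_2$ of $B$ from Theorem \ref{thm:073}, invokes the product spectral measure $E_1\times E_2$ (Theorem \ref{thm:022}) and then the Fubini-type multiplication theorem for spectral integrals (Theorem \ref{thm:042}, quoting Schm\"udgen) to get $A^{-1}B=\int f(t)w\,d(E_1\times E_2)$ in one stroke, with all domain issues absorbed into the cited theorem (the boundedness of $w$ on the compact support of $E_2$ ensuring no closure is needed). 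You instead obtain the joint measure $F$ from the commutative $C^{*}$-algebra generated by $A$ and $B$ (Fuglede plus Gelfand theory) and then verify $T=\int(w/s)\,dF$ by hand: the inclusion $S_0\subset T$ via the bounded-times-unbounded product rule, and $\D(T)\subset\D(S_0)$ via the truncation estimate with $\mathbf{1}_{\{|s|\ge\varepsilon\}}s^{-1}$ and monotone convergence, using $F(\{s=0\})=0$ from the injectivity of $A$ — all of which is sound. What your route buys is a self-contained treatment of the domain bookkeeping, at the cost of importing the joint spectral theorem for a commuting pair, which is machinery of essentially the same weight as the paper's product spectral measure; what the paper's route buys is brevity, since Theorems \ref{thm:022} and \ref{thm:042} do exactly the work of your $F$ and your two inclusions. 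Note also that your explicit appeal to Fuglede is not extra baggage: the paper's one-line assertion that $AB=BA$ implies $E_1$ and $E_2$ commute silently uses the same fact, since commuting with the normal operator $B$ must be upgraded to commuting with its spectral projections.
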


In \cite{Riesz-Lorch}, Riesz and Lorch proved the spectral theorem for self-adjoint operators. 
(Their proof is extend in \cite{Conway} and \cite{Rudin} to normal operators.)
They first established the result for bounded self-adjoint operators, then considered the unbounded case.
Theorem \ref{thm:092} was used to derive the result for the unbounded case from that for the bounded one.
In their argument, the commutativity $AB=BA$ is used a bit indirectly.
Moreover, for some reason or other, the self-adjointness of $B$ is not used.

The second aim of this paper is to give an argument with a direct use of the commutativity $AB=BA$ and the normality of $B$.
To this end, the product of two spectral measures is essential.
Our argument gives a natural way to derive Theorem \ref{thm:065} from the bounded version of it.

This paper is organized as follows.
In Section \ref{sec:decomposition}, we prove Theorem \ref{thm:018}.
In Section \ref{sec:spec thm}, we first introduce product spectral measures, then we prove Theorem \ref{thm:065}, using Theorem \ref{thm:018}.

There are various proofs of the spectral theorem.
For a historical overview of this theorem and its proof, see \cite[Section X.9]{Dunford-Schwartz}.

%%%%%%%%%%%%%%%%%%%  Section 2 %%%%%%%%%%%%%%%%%%%%%%%%%%%%%%%%%%%%%%%%%%%%

\section{Proof of Theorem \ref{thm:018}}
\label{sec:decomposition}

Let $T:\HS\to\HS$ be a linear operator. 
We denote by $\D(T)$ and $\RA(T)$ the domain and the range of $T$, respectively. 
For two operators $T$ and $S$, we write $T\subset S$ if $\D(T)\subset\D(S)$ and $Tx=Sx$ holds for all $x\in\D(T)$. 
If $T\subset S$ and $T\supset S$, then we write $T=S$. 
We say that $T$ is \emph{densely defined} if $\D(T)$ is dense in $\HS$. 
For a densely defined operator $T$, we denote by $T^{*}$ the adjoint operator of $T$. 
We recall the definitions of a normal operator and a self-adjoint operator.

\begin{definition}
Let $T:\HS\to\HS$ be a densely defined operator.
We say that $T$ is \emph{normal} if $T^*T=TT^*$. 
Moreover, we say that $T$ is \emph{self-adjoint} if $T^*=T$.
\end{definition}

Let $I:\HS\to\HS$ be the identity mapping.
For the proof of Theorem \ref{thm:018}, we need the following theorem:

\begin{theorem}%[von Neumann]
\label{thm:500}
Let $T:\HS\to\HS$ be a closed and densely defined operator. 
Then the following three assertions hold:
\begin{enumerate}
\item[(i)]$T^{*}T:\HS\to\HS$ is self-adjoint.
\item[(ii)]$I+T^{*}T$ is injective and $\RA(I+T^{*}T)=\HS$.
\item[(iii)]$(I+T^{*}T)^{-1}$ belongs to $\BH$ and is self-adjoint.
\end{enumerate}
\end{theorem}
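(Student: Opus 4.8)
The plan is to run von Neumann's classical graph argument. Write $\Gamma(T)=\{(x,Tx):x\in\D(T)\}\subset\HS\oplus\HS$ for the graph of $T$; since $T$ is closed, $\Gamma(T)$ is a closed subspace, and since $T$ is densely defined, $T^{*}$ is a well-defined operator. Introduce the unitary $W:\HS\oplus\HS\to\HS\oplus\HS$, $W(a,b)=(-b,a)$. The first and central step is to establish the orthogonal decomposition
\[
\HS\oplus\HS=\Gamma(T)\oplus W\big(\Gamma(T^{*})\big).
\]
Indeed, $(u,v)\perp\Gamma(T)$ means $\langle u,x\rangle+\langle v,Tx\rangle=0$ for every $x\in\D(T)$, which says exactly that $v\in\D(T^{*})$ and $u=-T^{*}v$; hence $\Gamma(T)^{\perp}=\{(-T^{*}v,v):v\in\D(T^{*})\}=W(\Gamma(T^{*}))$, and closedness of $\Gamma(T)$ turns this into the displayed orthogonal sum. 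I expect this to be the only real obstacle; the rest is bookkeeping with inner products and adjoints.

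Granting the decomposition, fix $h\in\HS$ and split $(h,0)=(x,Tx)+(-T^{*}v,v)$ with $x\in\D(T)$ and $v\in\D(T^{*})$. Comparing coordinates gives $v=-Tx$ and $h=x-T^{*}v=x+T^{*}Tx$; in particular $Tx\in\D(T^{*})$, so $x\in\D(T^{*}T)$ and $h=(I+T^{*}T)x$. Thus $\RA(I+T^{*}T)=\HS$. Injectivity follows from $\langle(I+T^{*}T)x,x\rangle=\|x\|^{2}+\|Tx\|^{2}$, which additionally gives $\|x\|^{2}\le\langle(I+T^{*}T)x,x\rangle\le\|(I+T^{*}T)x\|\,\|x\|$, so $\|x\|\le\|(I+T^{*}T)x\|$. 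Hence $B:=(I+T^{*}T)^{-1}$ is defined on all of $\HS$ with $\|B\|\le1$; this proves (ii) and the boundedness claim in (iii).

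For the self-adjointness statements, write $h=(I+T^{*}T)x$, $k=(I+T^{*}T)y$ and compute $\langle Bh,k\rangle=\langle x,y\rangle+\langle Tx,Ty\rangle=\langle h,Bk\rangle$, so $B$ is a bounded everywhere-defined symmetric operator, hence self-adjoint; this finishes (iii). Since $B$ is also injective ($Bh=0$ forces $h=0$), self-adjointness gives $\overline{\RA(B)}=\ker(B)^{\perp}=\HS$, so $\RA(B)=\D(T^{*}T)$ is dense in $\HS$ and $B^{-1}$ is self-adjoint because $(B^{-1})^{*}=(B^{*})^{-1}=B^{-1}$. Finally $T^{*}T=B^{-1}-I$ on $\D(T^{*}T)$ is the difference of the self-adjoint operator $B^{-1}$ and the bounded self-adjoint operator $I$, hence self-adjoint, which is (i).
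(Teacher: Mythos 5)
Your proof is correct: it is the classical von Neumann argument via the orthogonal decomposition $\HS\oplus\HS=\Gamma(T)\oplus W(\Gamma(T^{*}))$, and every step (surjectivity of $I+T^{*}T$, the bound $\|x\|\le\|(I+T^{*}T)x\|$, symmetry of the bounded inverse, and recovering self-adjointness of $T^{*}T=B^{-1}-I$ via $(B^{-1})^{*}=(B^{*})^{-1}$) checks out. The paper does not prove this theorem itself but simply cites Conway (Proposition X.4.2), whose proof is essentially this same von Neumann graph argument, so your write-up just makes the quoted result self-contained.
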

\begin{proof}
See, e.g., \cite[Proposition X.4.2]{Conway}.
\end{proof}

Let us prove Theorem \ref{thm:018}.

\begin{proof}[Proof of Theorem \ref{thm:018}]
Thanks to Theorem \ref{thm:500}, we can define $A,B:\HS\to\HS$ by 
\begin{align*}
A=(I+T^{*}T)^{-1},\quad B=TA.
\end{align*}
Then, $A\in\BH$ from Theorem \ref{thm:500} (iii).
We show $B\in \BH$. 
Since $\RA(A)=\D(I+T^{*}T)=\D(T^{*}T)\subset\D(T)$, we have $\D(B)=\D(A)=\HS$.
Moreover, $B$ is closed since $T$ is closed and $A$ is bounded.
Therefore, by the closed graph theorem, we have $B\in\BH$. 

(i) $A$ is injective from Theorem \ref{thm:500} (ii). 

(ii)\ We first show the following claim:

\begin{cla}
\label{cla:012}
\normalfont
$AT\subset B$.
\end{cla}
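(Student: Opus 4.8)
The plan is to establish the claim via the pointwise identity $ATx = TAx$ for every $x \in \D(T)$. Since $A \in \BH$ is everywhere defined we have $\D(AT) = \D(T) \subset \HS = \D(B)$, so this identity is exactly what is needed. Fix $x \in \D(T)$. The first step is to use $\RA(A) = \D(T^{*}T)$ (established above) so that $Ax \in \D(T^{*}T)$ and $A^{-1} = I + T^{*}T$ may be applied to $Ax$, yielding $Ax + T^{*}TAx = x$. In particular $TAx \in \D(T^{*})$ and $T^{*}(TAx) = x - Ax$.

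Next I would show that $TAx$ itself lies in $\D(T^{*}T)$, so that $A^{-1}$ can be applied to it too. Since $x \in \D(T)$ and $Ax \in \D(T^{*}T) \subset \D(T)$, the vector $x - Ax$ lies in $\D(T)$; hence $T^{*}(TAx) = x - Ax \in \D(T)$, which says precisely that $TAx \in \D(TT^{*})$. Invoking normality, $\D(TT^{*}) = \D(T^{*}T)$, so indeed $TAx \in \D(T^{*}T) = \D(A^{-1})$. Then I would compute $A^{-1}(TAx) = TAx + T^{*}T(TAx)$, and, using normality once more, $T^{*}T(TAx) = TT^{*}(TAx) = T\bigl(T^{*}(TAx)\bigr) = T(x - Ax) = Tx - TAx$. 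Therefore $A^{-1}(TAx) = Tx$, and applying $A$ gives $TAx = ATx$.

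The step I expect to be the genuine obstacle is the middle one: pulling $TAx$ back into the domain $\D(T^{*}T)$ on which $A^{-1}$ acts. For a merely closed and densely defined operator one only obtains $TAx \in \D(T^{*})$; it is the normality hypothesis --- used in the form $\D(TT^{*}) = \D(T^{*}T)$ together with $T^{*}T = TT^{*}$ on that common domain --- that upgrades this and makes the final computation close. The remaining manipulations are routine bookkeeping with $A^{-1} = I + T^{*}T$ and Theorem~\ref{thm:500}.
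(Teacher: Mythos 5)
Your proof is correct and is essentially the paper's argument in pointwise form: your vector $Ax$ is exactly the paper's $y$ with $x=(I+T^{*}T)y$, and your domain bookkeeping for $TAx$ is just a careful unpacking of the paper's use of normality in the identity $T(I+T^{*}T)=(I+T^{*}T)T$, leading to the same conclusion $ATx=TAx=Bx$.
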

\begin{proof}[Proof of Claim \ref{cla:012}]
We have $\D(AT)\subset\D(B)$ since $\D(B)=\HS$. 
Let $x\in\D(AT)$. 
Theorem \ref{thm:500} (ii) implies that there is $y\in\D(I+T^{*}T)$ such that $x=(I+T^{*}T)y$. 
Clearly, $Bx=Ty$. 
On the other hand, since $T$ is normal, $T(I+T^{*}T)=T+(TT^{*})T=T+(T^{*}T)T=(I+T^{*}T)T$, and hence, $ATx=(I+T^{*}T)^{-1}((I+T^{*}T)Ty)=Ty$. 
Combining, we obtain $ATx=Bx$.
\end{proof}

We prove (ii).  
By Claim \ref{cla:012}, we have $T=(A^{-1}A)T=A^{-1}(AT)\subset A^{-1}B$.
It remains to show $\D(T)\supset\D(A^{-1}B)$. 
Let $x\in\D(A^{-1}B)$. 
Then, $Bx\in\D(A^{-1})$.
Theorem \ref{thm:500} (ii) implies that there is $y\in\D(I+T^{*}T)$ such that $x=(I+T^{*}T)y$. 
We have $Ty\in\D(A^{-1})$ since $Bx=Ty$.
Moreover, since $T$ is normal, $\D(A^{-1})=\D(I+T^{*}T)=\D(T^{*}T)=\D(TT^{*})$.
Thus, we have $Ty\in\D(TT^{*})$, and hence, $T^{*}Ty\in\D(T)$.
This and $x=y+T^*Ty$ imply $x\in\D(T)$.

(iii)\ It is enough for us to prove $AB\subset BA$. 
By Claim \ref{cla:012}, we have $AB=A(TA)=(AT)A\subset BA$.

(iv)\ $A$ is self-adjoint from Theorem \ref{thm:500} (iii).
Notice that $AT$ is densely defined since $\D(A)=\HS$ and  $T$ is densely defined.
Thus, we can define $(AT)^{*}$.
Moreover, by Claim \ref{cla:012}, we have $B^{*}\subset(AT)^{*}=T^{*}A$. This and $\D(B^{*})=\HS$ imply
\begin{equation}
\label{eq:806}
B^{*}=T^{*}A.
\end{equation}
By (\ref{eq:806}) and the commutativity (iii), we have $B^{*}B=T^{*}AB=T^{*}BA=(T^{*}T)AA$.
Since $T$ is normal, $(T^{*}T)AA=(TT^{*})AA$.
Moreover, (iii) and the self-adjointness of $A$ imply $AB^{*}=B^{*}A$, and hence, $(TT^{*})AA=TB^{*}A=TAB^{*}=BB^{*}$.
Combining, we conclude that $B^{*}B=BB^{*}$, that is, $B$ is normal. 
Moreover, if $T$ is self-adjoint, then $B^{*}=TA=B$ from (\ref{eq:806}), and hence, we conclude that $B$ is self-adjoint.
\end{proof}

%%%%%%%%%%%%%%%%%%%  Section 3 %%%%%%%%%%%%%%%%%%%%%%%%%%%%%%%%%%%%%%%%%%%%

\section{Proof of the spectral theorem}
\label{sec:spec thm}

We quote the spectral theorem for bounded normal operators:
\begin{thm}
\label{thm:073}
Let $T:\HS\to\HS$ be a normal operator. 
If $T$ is bounded, then there is a Borel spectral measure $E$ such that the following equality holds:
\[
T=\int_{\C}z\,E(dz).
\]
\end{thm}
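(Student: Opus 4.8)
The plan is to obtain Theorem~\ref{thm:073} from the spectral theorem for bounded self-adjoint operators by splitting $T$ into its real and imaginary parts and reassembling their spectral measures via a product spectral measure. Put $X=(T+T^{*})/2$ and $Y=(T-T^{*})/(2i)$, so that $X,Y$ are bounded self-adjoint operators and $T=X+iY$; the normality $T^{*}T=TT^{*}$ is precisely the relation $XY=YX$. Applying the bounded self-adjoint spectral theorem to $X$ and to $Y$ yields Borel spectral measures $E_{X},E_{Y}$, each carried by a bounded interval of $\R$, with $X=\int s\,E_{X}(ds)$ and $Y=\int t\,E_{Y}(dt)$.

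The next step is to show that $E_{X}$ and $E_{Y}$ commute. Since $Y$ commutes with $X$, it commutes with every polynomial in $X$, hence, taking uniform limits through the continuous functional calculus of $X$, with $f(X)$ for every continuous $f$ on $\sigma(X)$, and then---by the standard pointwise-limit argument on the matrix elements $\langle f(X)x,y\rangle$---with the spectral projections $E_{X}(\delta)$ for all Borel sets $\delta$. Thus each $E_{X}(\delta)$ commutes with $Y$, and running the same argument with the roles of $X$ and $Y$ exchanged shows that $E_{X}(\delta)$ commutes with $E_{Y}(\omega)$ for all Borel $\delta,\omega$. Consequently the product spectral measure $F:=E_{X}\times E_{Y}$, determined by $F(\delta\times\omega)=E_{X}(\delta)E_{Y}(\omega)$, is a well-defined Borel spectral measure on $\R\times\R$ whose two coordinate pushforwards are $E_{X}$ and $E_{Y}$; in particular $\int s\,F(ds\,dt)=X$ and $\int t\,F(ds\,dt)=Y$.

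Finally, identify $\R\times\R$ with $\C$ through the homeomorphism $(s,t)\mapsto s+it$ and let $E$ be the image of $F$ under it; then $E$ is a Borel spectral measure on $\C$, and by the change-of-variables formula and linearity of the spectral integral,
\[
\int_{\C}z\,E(dz)=\int_{\R\times\R}(s+it)\,F(ds\,dt)=\int s\,F(ds\,dt)+i\int t\,F(ds\,dt)=X+iY=T,
\]
which is the assertion of Theorem~\ref{thm:073}. Since $T$, $X$, $Y$ and all the measures involved are bounded, no domain issues arise; the one genuinely substantive point---and the step I expect to be the main obstacle---is the passage from the bare commutation $XY=YX$ to the commutation of the full spectral resolutions, that is, developing just enough bounded functional calculus to legitimately form $F$. (An alternative route that bypasses product measures is to show that the unital $C^{*}$-algebra generated by $T$ is commutative, identify it with $C(\sigma(T))$ via the Gelfand transform, and extend the resulting $*$-homomorphism $f\mapsto f(T)$ from continuous to bounded Borel functions by applying the Riesz representation theorem to the positive functionals $f\mapsto\langle f(T)x,x\rangle$, $x\in\HS$; this is the proof usually quoted but meshes less directly with the product-measure machinery of this paper.)
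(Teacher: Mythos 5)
Your proposal is correct, but be aware that the paper itself does not prove Theorem~\ref{thm:073} at all: it is quoted as known, with a citation to Conway, where it is obtained essentially by the commutative $C^{*}$-algebra/Gelfand-transform route that you mention parenthetically at the end. What you give instead is a genuine reduction of the bounded normal case to the bounded self-adjoint case: the Cartesian decomposition $T=X+iY$, the observation that $T^{*}T=TT^{*}$ is equivalent to $XY=YX$, the commutation of the spectral resolutions $E_X$ and $E_Y$, the product spectral measure $F=E_X\times E_Y$, and the pushforward of $F$ under $(s,t)\mapsto s+it$. This meshes precisely with the machinery the paper develops for the unbounded case (Theorems~\ref{thm:022} and~\ref{thm:042}), and there is no circularity, since the product-measure theorem does not rest on the spectral theorem for normal operators; combined with Section~\ref{sec:spec thm}, your route would make the whole argument depend only on the bounded self-adjoint spectral theorem plus the product-measure theorem, which is arguably more in the spirit of the paper than quoting the bounded normal case outright. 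The one step that genuinely needs care is the one you flag: passing from $XY=YX$ to $E_X(\delta)E_Y(\omega)=E_Y(\omega)E_X(\delta)$. Your sketch is the standard and valid way to do it; to make it airtight, note that for fixed $x,y$ the set functions $\delta\mapsto\langle E_X(\delta)Yx,y\rangle$ and $\delta\mapsto\langle E_X(\delta)x,Y^{*}y\rangle$ are complex Borel measures on the compact set $\sigma(X)$ which agree on all continuous functions (by the polynomial approximation and $YX^{n}=X^{n}Y$), hence coincide, giving $E_X(\delta)Y=YE_X(\delta)$; repeating the argument with $Y$ in place of $X$ and $E_X(\delta)$ in place of $Y$ yields the commutation of the two spectral measures, after which your construction of $F$ and the change-of-variables computation $\int_{\C}z\,E(dz)=X+iY=T$ go through without domain issues, all supports being compact.
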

For the proof of Theorem \ref{thm:073}, see, e.g., \cite[Theorem IX.2.2]{Conway}.

The aim of this section is to derive Theorem \ref{thm:065} from Theorem \ref{thm:073} via Theorem \ref{thm:018}, with a direct use of the commutativity $AB=BA$ and the normality of $B$.
To this end, we need the following theorem:

\begin{theorem}
\label{thm:022}
Let $\Omega_1$ and $\Omega_2$ be locally compact Hausdorff topological spaces with countable open basis and let $E_1$ and $E_2$ be Borel spectral measures on $\Omega_1$ and $\Omega_2$, respectively. 
Assume that $E_1$ and $E_2$ act on the same Hilbert space and commute, that is, $E_1(A)E_2(B)=E_2(B)E_1(A)$ holds for any Borel sets $A\subset\Omega_1$ and $B\subset\Omega_2$. 
Then there is a unique Borel spectral measure $E_1\times E_2$ on $\Omega_1\times\Omega_2$ such that
\[
(E_1\times E_2)(A\times B)=E_1(A)E_2(B)
\]
holds for any Borel sets $A\subset\Omega_1$ and $B\subset\Omega_2$. 
\end{theorem}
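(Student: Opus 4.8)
My plan is to treat uniqueness by a standard $\pi$--$\lambda$ argument on the associated scalar measures, and to obtain existence by amalgamating the functional calculi of $E_1$ and $E_2$ into a single $*$-representation of $C_0(\Omega_1\times\Omega_2)$ and then invoking the correspondence between such representations and Borel spectral measures.

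For uniqueness, I would first recall that a Borel spectral measure $E$ on $\Omega_1\times\Omega_2$ is determined by the finite positive Borel measures $x\mapsto\langle E(\cdot)x,x\rangle$, since by polarization these recover all matrix coefficients $\langle E(\cdot)x,y\rangle$ and hence $E$ itself. The countable-open-basis hypothesis ensures that $\mathcal{B}(\Omega_1\times\Omega_2)$ equals the product $\sigma$-algebra $\mathcal{B}(\Omega_1)\otimes\mathcal{B}(\Omega_2)$, which is generated by the $\pi$-system of measurable rectangles $A\times B$; moreover this $\pi$-system contains the total space $\Omega_1\times\Omega_2$. Thus if $E$ and $E'$ both satisfy the stated rectangle identity, then for every $x$ the finite measures $\langle E(\cdot)x,x\rangle$ and $\langle E'(\cdot)x,x\rangle$ agree on this generating $\pi$-system, and the $\pi$--$\lambda$ theorem forces them to agree on all Borel sets; hence $E=E'$.

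For existence, write $\Phi_i\colon C_0(\Omega_i)\to\BH$, $\Phi_i(f)=\int f\,dE_i$, for the non-degenerate $*$-representation attached to $E_i$. Since $E_1$ and $E_2$ commute, so do the ranges of $\Phi_1$ and $\Phi_2$, so the bilinear map $(f,g)\mapsto\Phi_1(f)\Phi_2(g)$ induces a well-defined $*$-homomorphism $\Psi_0$ on the algebraic tensor product $C_0(\Omega_1)\odot C_0(\Omega_2)$ --- here the commutativity of $E_1,E_2$ is precisely what makes $\Psi_0$ multiplicative and $*$-preserving. By the Stone--Weierstrass theorem the span of the functions $(\omega_1,\omega_2)\mapsto f(\omega_1)g(\omega_2)$, $f\in C_0(\Omega_1)$, $g\in C_0(\Omega_2)$, is a dense $*$-subalgebra of $C_0(\Omega_1\times\Omega_2)$, which we identify with $C_0(\Omega_1)\odot C_0(\Omega_2)$; and $\Psi_0$ is contractive for the supremum norm, being the product of two commuting $*$-representations of $C^*$-algebras, hence extending to a $*$-representation of the minimal tensor product $C_0(\Omega_1)\otimes_{\min}C_0(\Omega_2)$, which for commutative $C^*$-algebras coincides with $C_0(\Omega_1\times\Omega_2)$. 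Thus $\Psi_0$ extends to a non-degenerate $*$-representation $\Psi\colon C_0(\Omega_1\times\Omega_2)\to\BH$, and by the representation form of the spectral theorem --- available since $\Omega_1\times\Omega_2$ is locally compact Hausdorff with a countable basis (see, e.g., \cite{Conway}) --- there is a unique Borel spectral measure, which we denote $E_1\times E_2$, with $\int h\,d(E_1\times E_2)=\Psi(h)$ for all $h\in C_0(\Omega_1\times\Omega_2)$.

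It then remains to verify the rectangle identity, which I would carry out by two applications of the functional form of the monotone class theorem. By construction $\int(f\otimes g)\,d(E_1\times E_2)=\Phi_1(f)\Phi_2(g)$ for $f\in C_0(\Omega_1)$ and $g\in C_0(\Omega_2)$; fixing $g$, the set of bounded Borel $f$ on $\Omega_1$ for which $\int(f\otimes g)\,d(E_1\times E_2)=\big(\int f\,dE_1\big)\Phi_2(g)$ is a vector space, closed under bounded pointwise limits (by dominated convergence applied to the finite measures $\langle(E_1\times E_2)(\cdot)x,y\rangle$ and $\langle E_1(\cdot)x,y\rangle$), and contains $C_0(\Omega_1)$ together with the constants; since the countable basis makes $C_0(\Omega_1)$ generate $\mathcal{B}(\Omega_1)$, this set is all bounded Borel $f$, in particular $f=\mathbf{1}_A$. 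Fixing such an $A$ and running the same argument in the second variable yields $\int(\mathbf{1}_A\otimes\mathbf{1}_B)\,d(E_1\times E_2)=E_1(A)E_2(B)$, that is, $(E_1\times E_2)(A\times B)=E_1(A)E_2(B)$. I expect the main obstacle to be the tensor-product step in the existence part: verifying carefully that $\Psi_0$ is well defined and bounded on $C_0(\Omega_1)\odot C_0(\Omega_2)$ for the norm inherited from $C_0(\Omega_1\times\Omega_2)$ --- equivalently, that two commuting non-degenerate representations of $C_0(\Omega_1)$ and $C_0(\Omega_2)$ genuinely assemble into a single representation of $C_0(\Omega_1\times\Omega_2)$ --- whereas the uniqueness argument and the two monotone class arguments are routine modulo the standard consequences of second countability noted above.
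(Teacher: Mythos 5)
Your proposal is correct in outline, but it is worth noting that the paper does not prove this theorem at all: it is quoted with a reference to Schm\"udgen's book (Theorem 4.10 there), whose proof is essentially measure-theoretic --- one builds the complex measures $\langle (E_1\times E_2)(\cdot)x,y\rangle$ from the scalar measures attached to $E_1$ and $E_2$ via a product-measure/extension argument, using regularity (automatic from second countability) to verify countable additivity and the spectral-measure axioms. Your route is genuinely different: you assemble the two commuting functional calculi $\Phi_1,\Phi_2$ into a single non-degenerate $*$-representation of $C_0(\Omega_1\times\Omega_2)$ and then pull back a spectral measure through the representation--spectral-measure correspondence, finishing with two monotone class passes to get the rectangle identity, and with a clean $\pi$--$\lambda$ argument for uniqueness (where second countability is exactly what gives $\mathcal{B}(\Omega_1\times\Omega_2)=\mathcal{B}(\Omega_1)\otimes\mathcal{B}(\Omega_2)$). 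The step you flag as the main obstacle is indeed the crux and is a known fact: contractivity of $\Psi_0$ for the supremum norm follows either from the universal property of the maximal tensor product together with nuclearity of commutative $C^*$-algebras (so $\max=\min$ and $C_0(\Omega_1)\otimes_{\min}C_0(\Omega_2)\cong C_0(\Omega_1\times\Omega_2)$), or more elementarily by working inside the commutative $C^*$-algebra generated by the two commuting ranges and applying Gelfand theory. Two small points you should make explicit: non-degeneracy of $\Psi$ needs a short argument with approximate identities $\Phi_1(e_\alpha)\Phi_2(f_\beta)\to I$ strongly; and in the first monotone class argument the constant function $1$ is not in $C_0(\Omega_1)$ unless $\Omega_1$ is compact, so you should obtain it as a bounded pointwise limit of an increasing sequence in $C_0(\Omega_1)$, which exists because a second countable locally compact Hausdorff space is $\sigma$-compact. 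With these details supplied, your argument is a complete proof; it trades the elementary measure theory of the cited proof for standard but heavier $C^*$-algebra machinery, and in exchange makes the role of commutativity (multiplicativity of $\Psi_0$) and of second countability (regularity, product Borel $\sigma$-algebra, $\sigma$-compactness) very transparent.
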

\begin{proof}
See, e.g., \cite[Theorem 4.10]{Schmudgen}.
\end{proof}

We call the above $E_1\times E_2$ the \emph{product spectral measure} of $E_1$ and $E_2$. 
We can verify the following Fubini-type theorem:

\begin{theorem}
\label{thm:042}
Let $\Omega_1,\Omega_2$ and $E_1,E_2$ be as in Theorem \ref{thm:022} and let $f_1:\Omega_1\to\C,\,f_2:\Omega_2\to\C$ be Borel measurable functions. 
If $f_2$ is bounded, then we have
\[
\left(\int_{\Omega_1}f_1\,dE_1\right)\left(\int_{\Omega_2}f_2\,dE_2\right)=\int_{\Omega_1\times\Omega_2}f_1(\omega_{1})f_2(\omega_{2})\,(E_1\times E_2)(d\omega_{1}d\omega_{2}).
\]
\end{theorem}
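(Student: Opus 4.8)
The plan is to reduce the Fubini-type identity for spectral-measure integrals to the case already packaged in Theorem \ref{thm:022}, namely the identity on product rectangles $(E_1\times E_2)(A\times B)=E_1(A)E_2(B)$, and then bootstrap from indicator functions to simple functions to general Borel functions by the standard three-step approximation argument. The key structural fact I would exploit is that for a spectral measure $E$ on $\Omega$ and a bounded Borel $g$, the operator $\int_\Omega g\,dE$ is a norm-limit of integrals of simple functions, while for unbounded $f_1$ the operator $\int_{\Omega_1}f_1\,dE_1$ is defined on the dense domain $\D\big(\int f_1\,dE_1\big)=\{x:\int|f_1|^2\,d\langle E_1(\cdot)x,x\rangle<\infty\}$, and one has the isometry-type formula $\|\int f_1\,dE_1\,x\|^2=\int|f_1|^2\,d\mu_x$ where $\mu_x(\cdot)=\langle E_1(\cdot)x,x\rangle$.

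\medskip

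Concretely I would proceed as follows. First, suppose $f_1=\mathbf 1_A$ and $f_2=\mathbf 1_B$ for Borel sets $A\subset\Omega_1$, $B\subset\Omega_2$; then the left side is $E_1(A)E_2(B)$ and the right side is $\int_{\Omega_1\times\Omega_2}\mathbf 1_{A\times B}\,d(E_1\times E_2)=(E_1\times E_2)(A\times B)$, so the identity is precisely the defining property in Theorem \ref{thm:022}. Second, by linearity in $f_1$ and in $f_2$ separately, the identity extends to the case where $f_1$ is a simple Borel function on $\Omega_1$ and $f_2$ is a simple Borel function on $\Omega_2$; here one must also check that $(\omega_1,\omega_2)\mapsto f_1(\omega_1)f_2(\omega_2)$ is a simple Borel function on $\Omega_1\times\Omega_2$, which is immediate since the product of finitely many Borel rectangles' indicators decomposes into a finite Borel partition. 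Third, fix $f_2$ simple and let $f_1$ be a general Borel function: choose simple $f_1^{(n)}\to f_1$ pointwise with $|f_1^{(n)}|\le|f_1|$; for $x\in\D\big(\int f_1\,dE_1\big)$ one has $\int f_1^{(n)}\,dE_1\,x\to\int f_1\,dE_1\,x$ by dominated convergence against $\mu_x$, and correspondingly $f_1^{(n)}(\omega_1)f_2(\omega_2)\to f_1(\omega_1)f_2(\omega_2)$ with an $L^2$-dominating function $|f_1(\omega_1)|\,\|f_2\|_\infty$ against the scalar measure $\langle(E_1\times E_2)(\cdot)x',x'\rangle$ for appropriate $x'$; here I would use the Fubini theorem for the scalar measures $\langle(E_1\times E_2)(\cdot)x,x\rangle$ versus the product of $\mu_x^{E_1}$ and $\mu_x^{E_2}$, which follows from the rectangle identity and uniqueness of product measures, to see that $\D\big(\int f_1(\omega_1)f_2(\omega_2)\,d(E_1\times E_2)\big)$ contains $\D\big(\int f_1\,dE_1\big)$ and that the limit is the claimed operator on that domain. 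Fourth, remove the restriction that $f_2$ is simple by approximating the bounded Borel $f_2$ uniformly by simple $f_2^{(m)}$, so that $\int f_2^{(m)}\,dE_2\to\int f_2\,dE_2$ in operator norm and $f_1(\omega_1)f_2^{(m)}(\omega_2)\to f_1(\omega_1)f_2(\omega_2)$ uniformly in $\omega_2$; the uniform bound on $f_2^{(m)}$ keeps the domain stable and lets one pass to the limit in the closed operator $\int_{\Omega_1\times\Omega_2}f_1(\omega_1)f_2(\omega_2)\,d(E_1\times E_2)$.

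\medskip

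The main obstacle I anticipate is bookkeeping about domains in the unbounded case: one must verify that the two sides have the same domain, i.e. that $x\in\D\big(\int f_1\,dE_1\big)$ if and only if $x\in\D\big(\int f_1(\omega_1)f_2(\omega_2)\,d(E_1\times E_2)\big)$ — and the ``only if'' direction is the delicate one since boundedness of $f_2$ is exactly what makes $|f_1(\omega_1)f_2(\omega_2)|^2\le\|f_2\|_\infty^2|f_1(\omega_1)|^2$ integrable whenever $|f_1|^2$ is. For this the scalar Fubini theorem identifying $\langle(E_1\times E_2)(\cdot)x,x\rangle$ with the genuine product measure of $\langle E_1(\cdot)x,x\rangle$ and a suitable companion measure is the right tool, and establishing it cleanly (again by the rectangle identity plus monotone-class/uniqueness) is the technical heart. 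Once the domains are matched, the convergence statements above are routine dominated- and uniform-convergence arguments, and the closedness of the spectral integral operators guarantees that the limiting operator on both sides agrees.
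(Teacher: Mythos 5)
Your overall scheme (rectangles, then simple functions, then limits with domain bookkeeping) could in principle be made to work, but the two points you yourself identify as the technical heart are stated incorrectly, and as written the argument fails there. First, the domain identity you set out to prove is the wrong one, and is false. The domain of the left-hand side $\bigl(\int_{\Omega_1}f_1\,dE_1\bigr)\bigl(\int_{\Omega_2}f_2\,dE_2\bigr)$ is $\{x\in\HS:\ (\int_{\Omega_2}f_2\,dE_2)x\in\D(\int_{\Omega_1}f_1\,dE_1)\}$, not $\D(\int_{\Omega_1}f_1\,dE_1)$, and your claimed equivalence ``$x\in\D(\int f_1\,dE_1)$ iff $x\in\D(\int f_1(\omega_1)f_2(\omega_2)\,d(E_1\times E_2))$'' fails in general: take $\HS=L^2(\R)$, $E_1=E_2$ the spectral measure of multiplication by $t$, $f_1(t)=t$, $f_2=\mathbf{1}_{[-1,1]}$; then $\D(\int f_1\,dE_1)\subsetneq\HS$ while $\int f_1(\omega_1)f_2(\omega_2)\,d(E_1\times E_2)$ is multiplication by $t\mathbf{1}_{[-1,1]}(t)$, a bounded operator with domain all of $\HS$. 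Only the inclusion $\D(\int f_1\,dE_1)\subset\D(\int f_1(\omega_1)f_2(\omega_2)\,d(E_1\times E_2))$ holds, and proving your ``only if'' direction is neither possible nor what the theorem requires. The identity actually needed is: $(\int f_2\,dE_2)x\in\D(\int f_1\,dE_1)$ if and only if $\int|f_1(\omega_1)f_2(\omega_2)|^2\,d\langle(E_1\times E_2)(\cdot)x,x\rangle<\infty$.

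Second, the tool you propose to establish the domain matching --- a ``scalar Fubini theorem identifying $\langle(E_1\times E_2)(\cdot)x,x\rangle$ with the genuine product measure'' of $\langle E_1(\cdot)x,x\rangle$ and a companion measure --- is false for commuting spectral measures. The rectangle values $\langle E_1(A)E_2(B)x,x\rangle$ do not factor as $\mu(A)\nu(B)$: in the example above with $E_1=E_2$ and $x=\mathbf{1}_{[0,1]}$ these values are $|A\cap B|$, a measure carried by the diagonal, which is not a product measure, so no monotone-class/uniqueness argument can deliver your identification, and the dominated-convergence step built on it collapses. The correct substitutes are the pushforward identities $\langle(E_1\times E_2)(\pi_i^{-1}(\cdot))x,x\rangle=\langle E_i(\cdot)x,x\rangle$, equivalently $\int_{\Omega_i}f_i\,dE_i=\int_{\Omega_1\times\Omega_2}f_i\circ\pi_i\,d(E_1\times E_2)$, together with the transformation rule $d\langle F(\cdot)Sx,Sx\rangle=|g|^2\,d\langle F(\cdot)x,x\rangle$ for $F=E_1\times E_2$, $g=f_2\circ\pi_2$ and the bounded operator $S=\int g\,dF$. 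Once both integrals are pulled back to the product space in this way, the statement reduces to the multiplicativity theorem for a single spectral measure --- this is exactly the paper's proof, which quotes Schm\"udgen's Theorem 4.16(iii) and uses the boundedness of $f_2$ only to note that the product operator is already closed, so no closure is needed.
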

\begin{proof}
Let $\pi_1$ and $\pi_2$ be the projections from $\Omega_1\times\Omega_2$ to $\Omega_1$ and $\Omega_2$, respectively.
Then, $\int_{\Omega_i}f_i\,dE_i=\int_{\Omega_1\times\Omega_2}f_i\circ\pi_i\,d(E_1\times E_2)$ holds for $i=1,2$.
Thus, the desired result follows from \cite[Theorem 4.16 (iii)]{Schmudgen}.
\end{proof}

\begin{remark}
For a closable linear operator $T$, we denote by $\overline{T}$ the closure of $T$. 
\cite[Theorem 4.16 (iii)]{Schmudgen} actually states that $\overline{(\int f\,dE)(\int g\,dE)}=\int fg\,dE$ holds, where $E$ is a spectral measure on a measurable space $\Omega$, and $f,g:\Omega\to\C$ are measurable functions (they may be unbounded). 
In Theorem \ref{thm:042}, by the boundedness of $f_2$, the operator $(\int_{\Omega_1}f_1\,dE_1)(\int_{\Omega_2}f_2\,dE_2)$ is closed, and we need not to take closure.
\end{remark}

We are ready to prove Theorem \ref{thm:065}.

\begin{proof}[Proof of Theorem \ref{thm:065}]
Let $A,B\in \BH$ be as in Theorem \ref{thm:018}. 
Since $A$ is self-adjoint and $B$ is normal, from Theorem \ref{thm:073}, there are Borel spectral measures $E_1$ and $E_2$ on $\R$ and $\C$, respectively, satisfying
\[
A=\int_{\R}t\,E_1(dt),\quad B=\int_{\C}w\,E_2(dw).
\]
Moreover, since $B$ is bounded, there is a compact set $K\subset\C$ such that $B=\int_{K}w\,E_2(dw)$.
We define $f:\R\to\R$ by $f(0)=0,\,f(t)=1/t\>(t\neq0)$. 
It is easy to show
\[
A^{-1}=\int_{\R}f(t)\,E_1(dt).
\]
The commutativity $AB=BA$ implies that $E_1$ and $E_2$ commute. 
Therefore, from Theorem \ref{thm:022}, the product spectral measure $E_1\times E_2$ on $\R\times K$ exists. 
By Theorem \ref{thm:042}, we have
\[
T=\int_{\R\times K}f(t)w\,(E_1\times E_2)(dt\,dw)=\int_{\C}z\,E(dz),
\]
where $E$ denotes the push forward of $E_1\times E_2$ by the map $\R\times K\ni(t,w)\mapsto f(t)w\in\C$. 
We obtain the desired result. 
\end{proof}

\centerline{\scshape Acknowledgments}
\medskip
The author would like to thank Mishio Kawashita (Hiroshima University), Koichiro Iwata (Hiroshima University) and Masao Hirokawa (Kyushu University) for helpful advices and comments. 
The author is supported by FY2019 Hiroshima University Grant-in-Aid for Exploratory Research (The researcher support of young Scientists).

\end{document}